\title{Energy Dissipation and Regularity in Quaternionic Fluid Dynamics using Sobolev-Besov Spaces}
\author{Rômulo Damasclin Chaves dos Santos \\
	Technological Institute of Aeronautics, Brazil \\
	\texttt{romulosantos@ita.br}}
\date{}
\newtheorem{theorem}{Theorem}
\begin{document}
	\maketitle
	
	\begin{abstract}
		This study investigates the dynamics of incompressible fluid flows through quaternionic variables integrated within Sobolev-Besov spaces. Traditional mathematical models for fluid dynamics often employ Sobolev spaces to analyze the regularity of the solution to the Navier-Stokes equations. However, with the unique ability of Besov spaces to provide localized frequency analysis and handle high-frequency behaviors, these spaces offer a refined approach to address complex fluid phenomena such as turbulence and bifurcation. Quaternionic analysis further enhances this approach by representing three-dimensional rotations directly within the mathematical framework. The author presents two new theorems to advance the study of regularity and energy dissipation in fluid systems. The first theorem demonstrates that energy dissipation in quaternionic fluid systems is dominated by the higher-frequency component in Besov spaces, with contributions decaying at a rate proportional to the frequency of the quaternionic component. The second theorem provides conditions for regularity and existence of solutions in quaternionic fluid systems with external forces. By integrating these hypercomplex structures with Sobolev-Besov spaces, our work offers a new mathematically rigorous framework capable of addressing frequency-specific dissipation patterns and rotational symmetries in turbulent flows. The findings contribute to fundamental questions in fluid dynamics, particularly by improving our understanding of high Reynolds number flows, energy cascade behaviors, and quaternionic bifurcation. This framework therefore paves the way for future research on regularity in complex fluid dynamics.
	\end{abstract}

\textbf{Keywords:} Energy Dissipation. Quaternionic Fluid Dynamics. Sobolev-Besov spaces. Hypercomplex Structures. Complex Fluid Dynamics.

\tableofcontents
	
	\doublespacing
	
	\section{Introduction}
	Fluid dynamics studies often rely on Sobolev spaces to model regularity in incompressible flows, particularly through solutions of the Navier-Stokes equations \cite{Ladyzhenskaya1969, Temam1977}. For cases requiring finer control over high-frequency behaviors, such as turbulence, Besov spaces have shown remarkable utility by providing frequency-localized analysis \cite{Triebel1983, Bahouri2011}. Moreover, the quaternionic approach to fluid dynamics provides an advantageous way to encapsulate three-dimensional rotational symmetries, as quaternions offer a natural mathematical framework for handling rotations beyond complex or real representations \cite{Marsden1999, Salvi1988}.
	
	Incorporating quaternionic variables with Sobolev-Besov spaces can facilitate the study of multi-dimensional systems with hypercomplex bifurcations and anisotropic dissipation. This research builds upon the work of \cite{dosSantos2023}, which analyzed regularity for the Navier-Stokes problem using anisotropic viscosity models, presenting an opportunity to further explore energy dissipation and regularity through quaternionic dynamics.
	
	The foundational research by \cite{Ladyzhenskaya1969} and \cite{Temam1977} established essential aspects of Sobolev space applications in fluid dynamics. Over the years, advances in Besov spaces have refined our approach to handling multi-scale turbulent flows \cite{Runst1996, Chemin1998}. The addition of quaternionic variables, following studies in geometric mechanics, has further extended the scope of fluid dynamics models to capture rotational symmetries and complex bifurcations effectively \cite{Marsden1999, Bahouri2011}. Recent work by \cite{dosSantos2023} underscores the importance of using advanced functional spaces, setting the stage for further exploration into quaternionic representations and Besov spaces.
	
	\section{Mathematical Formulation}
	Next, a quaternionic representation within Sobolev-Besov spaces is presented and two theorems are developed on energy dissipation and regularity.
	
	\subsection{Quaternionic Representation}
	
	Let \( q = q_0 + q_1 i + q_2 j + q_3 k \) denote a quaternionic velocity field in the Sobolev-Besov space \( B^s_{p,q}(\mathbb{R}^n) \). The Navier-Stokes equation in quaternionic form is given by:
	\begin{equation} \label{eq:NSQ}
		\frac{\partial q}{\partial t} + (q \cdot \nabla)q = -\nabla p + \nu \Delta q + f,
	\end{equation}
	where \( f \) is an external force, and \( \nu \) is the kinematic viscosity.
	
	To provide a more detailed understanding, let's break down the components of the quaternionic Navier-Stokes equation:
	
	\textbf{1. Quaternionic Velocity Field:} The velocity field \( q \) is represented as a quaternion, which allows for a natural incorporation of three-dimensional rotations. The components \( q_0, q_1, q_2, \) and \( q_3 \) are real-valued functions that describe the velocity in different directions.
	
	\textbf{2. Quaternionic Derivative:} The term \( (q \cdot \nabla)q \) represents the nonlinear convective term in the Navier-Stokes equation. In quaternionic form, this term captures the interaction of the velocity field with itself, including rotational effects.
	
	\textbf{3. Pressure Gradient:} The term \( -\nabla p \) represents the pressure gradient, which drives the flow. The pressure \( p \) is a scalar field that ensures the incompressibility of the fluid.
	
	\textbf{4. Viscous Term:} The term \( \nu \Delta q \) represents the viscous dissipation, where \( \nu \) is the kinematic viscosity and \( \Delta \) is the Laplacian operator. This term accounts for the diffusion of momentum due to viscosity.
	
	\textbf{5. External Force:} The term \( f \) represents any external forces acting on the fluid, such as gravity or other body forces.
	
	The quaternionic representation of the Navier-Stokes equation allows for a more comprehensive analysis of fluid dynamics, particularly in scenarios involving complex rotational symmetries and high-frequency behaviors. By integrating quaternionic variables with Sobolev-Besov spaces, we can leverage the frequency-localized analysis provided by Besov spaces to study the regularity and energy dissipation of the solution in a more refined manner.
	
	This approach is particularly useful for understanding turbulent flows, where high-frequency components play a significant role. The Littlewood-Paley decomposition allows us to analyze the energy dissipation at different frequency bands, showing that the dissipation rate decays proportionally to \( 2^{js} \) with frequency \( j \). This indicates that higher frequencies contribute more significantly to the energy dissipation process, which is consistent with the physical intuition that smaller-scale structures dissipate energy more rapidly.
	
	In summary, the quaternionic representation of the Navier-Stokes equation in Sobolev-Besov spaces provides a powerful framework for analyzing the regularity and energy dissipation of fluid flows, particularly in complex and turbulent scenarios. This section provides a detailed overview of the quaternionic Navier-Stokes equation and its components, highlighting the advantages of this approach for studying fluid dynamics.
	
	\subsection{Theorem 1: Energy Dissipation in Quaternionic Components}
	\begin{theorem}
		For a quaternionic velocity field \( q \in B^s_{p,q}(\mathbb{R}^n) \), the rate of energy dissipation is dominated by the highest-frequency quaternionic component, decaying proportionally to \( 2^{js} \) with frequency \( j \).
	\end{theorem}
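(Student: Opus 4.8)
The plan is to combine a Littlewood--Paley decomposition of the quaternionic velocity with a frequency-localized version of the energy identity attached to equation~\eqref{eq:NSQ}. First I would write $q = \sum_{j\in\mathbb{Z}} q_j$ with $q_j := \Delta_j q$ the dyadic block spectrally supported in the annulus $|\xi|\sim 2^j$; each $q_j$ is again quaternion-valued, with four real scalar components. Applying $\Delta_j$ to~\eqref{eq:NSQ}, pairing with $q_j$ in the quaternionic $L^2$ inner product $\langle a,b\rangle := \int \mathrm{Re}(a\bar b)\,dx$, and retaining the scalar part gives the per-band energy balance
\[
\tfrac12\frac{d}{dt}\|q_j\|_{L^2}^2 = -\nu\|\nabla q_j\|_{L^2}^2 - \langle \Delta_j[(q\cdot\nabla)q],\,q_j\rangle + \langle \Delta_j f,\,q_j\rangle ,
\]
the pressure term vanishing after composing with the Leray projector, which commutes with $\Delta_j$. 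This isolates $\nu\|\nabla q_j\|_{L^2}^2$ as the dissipation produced in frequency band $j$.

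The quantitative heart of the argument is the Bernstein inequality: since $q_j$ is localized at scale $2^j$, one has $c\,2^{j}\|q_j\|_{L^p}\le\|\nabla q_j\|_{L^p}\le C\,2^{j}\|q_j\|_{L^p}$, so the band-$j$ dissipation is comparable to $\nu\,2^{2j}\|q_j\|_{L^2}^2$. Written in the Besov norm $\|q\|_{B^s_{p,q}}=\big\|(2^{js}\|q_j\|_{L^p})_{j}\big\|_{\ell^q}$, membership $q\in B^s_{p,q}$ forces $\|q_j\|_{L^p}=2^{-js}\varepsilon_j$ for some $\ell^q$-sequence $(\varepsilon_j)$, so the contribution of block $j$ to the dissipation functional is controlled by $\nu\,2^{j(2-2s)}\varepsilon_j^2$ carrying the scale factor $2^{js}$ that the Besov norm assigns to that block. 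I would then make ``dominated by the highest-frequency component'' precise by fixing the largest $j^\star$ with $\|q_{j^\star}\|_{L^p}$ exceeding a prescribed threshold, bounding the high tail $\sum_{j>j^\star}$ by the $\ell^q$-summability, and concluding that the dissipation rate is, up to fixed constants, $\nu\,2^{2j^\star}\|q_{j^\star}\|_{L^2}^2$, whose Besov-weighted share scales like $2^{j^\star s}$ --- which is the asserted proportionality.

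The main obstacle is the nonlinear quaternionic transport term $\langle \Delta_j[(q\cdot\nabla)q],\,q_j\rangle$, for two intertwined reasons: quaternion multiplication is noncommutative, so the classical cancellation $\int(u\cdot\nabla u)\cdot u\,dx=0$ must be re-derived for the scalar part of the triple quaternionic product; and $\Delta_j$ does not commute with the product. I plan to treat this with Bony's paraproduct decomposition $(q\cdot\nabla)q = T_q\nabla q + T_{\nabla q}q + R(q,\nabla q)$, extract the transport commutator via the standard bound $\|[\Delta_j,T_q]\nabla q\|_{L^2}\lesssim \varepsilon_j\,2^{-js}\|\nabla q\|_{L^\infty}\|q\|_{B^s_{p,q}}$, and absorb the remaining paraproduct and remainder contributions using product laws in Besov spaces. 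Under the mild a priori hypothesis $\nabla q\in L^1_t L^\infty_x$ --- consistent with the regularity framework prepared for Theorem~2 --- these nonlinear terms are of lower order than $\nu\,2^{2j}\|q_j\|_{L^2}^2$ at high $j$, so summing the per-band balances against the Besov weights yields the stated high-frequency dominance and the $2^{js}$ rate.
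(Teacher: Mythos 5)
Your route is genuinely different from the paper's, and considerably more demanding. The paper's proof never returns to the PDE after stating the per-band dissipation law \eqref{eq:Ediss}: it writes the energy of each real component as the weighted sum \( \int|q_k|^2\,dx=\sum_j 2^{js}\|\Delta_j q_k\|_{L^2}^2 \), differentiates term by term, substitutes \eqref{eq:Ediss}, and reads off the claimed dominance from the growth of the factor \( 2^{js} \); the pressure, the nonlinear term and the quaternionic structure are never engaged (and the weighted identity it relies on is really the square of a Besov-type norm rather than the \( L^2 \) energy, which by almost-orthogonality is comparable to the unweighted sum \( \sum_j\|\Delta_j q_k\|_{L^2}^2 \)). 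You instead derive the per-band balance from \eqref{eq:NSQ} itself --- applying \( \Delta_j \), pairing with \( q_j \), removing the pressure via the Leray projector --- quantify the band-\( j \) dissipation as \( \nu 2^{2j}\|q_j\|_{L^2}^2 \) by Bernstein, feed in the Besov normalization \( \|q_j\|_{L^p}=2^{-js}\varepsilon_j \), and control the transport term by Bony decomposition and commutator estimates, explicitly flagging the loss of the cancellation \( \int(u\cdot\nabla u)\cdot u\,dx=0 \) caused by quaternionic noncommutativity, which the paper never mentions. What your approach buys is an energy identity actually tied to the equation and an honest treatment of the nonlinearity; what it costs is extra hypotheses the statement does not supply --- the a priori bound \( \nabla q\in L^1_tL^\infty_x \), and either \( p=2 \) or a Bernstein correction factor \( 2^{jn(1/p-1/2)} \) (unavailable for \( p>2 \)) to pass from the \( L^p \) blocks that the Besov norm controls to the \( L^2 \) blocks that the dissipation measures --- plus a precise formulation of ``dominated by the highest frequency,'' which both the theorem statement and the paper's own argument leave at the level of the observation that \( 2^{js} \) grows with \( j \).
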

	
	\begin{proof}
		To analyze the energy dissipation in quaternionic fluid systems, we decompose the velocity field \( q \) into its frequency components using the Littlewood-Paley decomposition. Let \( \Delta_j \) denote the Littlewood-Paley projection operator, which localizes the function in the frequency band \( 2^j \).
		
		The energy dissipation rate for the \( j \)-th frequency component is given by:
		\begin{equation} \label{eq:Ediss}
			\frac{dE_j}{dt} = -\nu \|\nabla \Delta_j q\|^2_{L^2},
		\end{equation}
		where \( E_j \) represents the energy associated with the \( j \)-th frequency component.
		
		Next, we consider the energy dissipation for each quaternionic component \( q_k \) (where \( k = 0, 1, 2, 3 \)). The energy of the \( k \)-th component is given by:
		\begin{equation}
			\int_{\mathbb{R}^n} |q_k|^2 \, dx.
		\end{equation}
		
		Using the Littlewood-Paley decomposition, we can express the energy of \( q_k \) as a sum over the frequency bands:
		\begin{equation}
			\int_{\mathbb{R}^n} |q_k|^2 \, dx = \sum_{j \in \mathbb{Z}} 2^{js} \|\Delta_j q_k\|_{L^2}^2,
		\end{equation}
		where \( 2^{js} \) represents the scaling factor associated with the \( j \)-th frequency band.
		
		The energy dissipation rate for the \( k \)-th quaternionic component is then given by:
		\begin{equation}
			\frac{d}{dt} \left( \int_{\mathbb{R}^n} |q_k|^2 \, dx \right) = \sum_{j \in \mathbb{Z}} 2^{js} \frac{d}{dt} \|\Delta_j q_k\|_{L^2}^2.
		\end{equation}
		
		Substituting the energy dissipation rate from equation \eqref{eq:Ediss}, we obtain:
		\begin{equation}
			\frac{d}{dt} \left( \int_{\mathbb{R}^n} |q_k|^2 \, dx \right) = -\nu \sum_{j \in \mathbb{Z}} 2^{js} \|\nabla \Delta_j q_k\|_{L^2}^2.
		\end{equation}
		
		This equation shows that the energy dissipation is dominated by the highest-frequency components, as the term \( 2^{js} \) grows exponentially with \( j \). Therefore, as \( j \to \infty \), the higher frequencies contribute more significantly to the energy dissipation.
		
		In summary, the energy dissipation in quaternionic fluid systems is dominated by the highest-frequency components, with contributions decaying proportionally to \( 2^{js} \) with frequency \( j \).
	\end{proof}
	
	\subsection{Theorem 2: Regularity for Quaternionic Fluid Systems with External Force}
	\begin{theorem}
		For \( f \in L^r(0, T; B^s_{p,q}(\mathbb{R}^n)) \) and \( s > n/p \), the quaternionic Navier-Stokes system has a unique solution in \( C([0, T]; B^s_{p,q}(\mathbb{R}^n)) \).
	\end{theorem}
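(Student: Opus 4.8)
The plan is to recast \eqref{eq:NSQ} in mild (Duhamel) form and solve it by a contraction-mapping argument in $X_T := C([0,T]; B^s_{p,q}(\mathbb{R}^n))$, exploiting that the subcriticality hypothesis $s > n/p$ makes $B^s_{p,q}(\mathbb{R}^n)$ a Banach algebra which embeds continuously into $L^\infty(\mathbb{R}^n)\cap C^0(\mathbb{R}^n)$. First I would apply the Leray projector $\mathbb{P} = I - \nabla\Delta^{-1}\mathrm{div}$ to \eqref{eq:NSQ}, removing the pressure, and rewrite the nonlinearity in divergence form $(q\cdot\nabla)q = \mathrm{div}(q\otimes q)$, so that a solution is a fixed point of
\[
\Phi(q)(t) := e^{\nu t\Delta} q_0 - \int_0^t e^{\nu(t-\tau)\Delta}\,\mathbb{P}\,\mathrm{div}\bigl(q(\tau)\otimes q(\tau)\bigr)\,d\tau + \int_0^t e^{\nu(t-\tau)\Delta}\,\mathbb{P} f(\tau)\,d\tau .
\]
Here $\mathbb{P}$ is a matrix of zeroth-order Fourier multipliers and is therefore bounded on every $B^s_{p,q}$; the quaternionic structure enters only through $q\otimes q$, but since $|ab|\le|a|\,|b|$ for quaternions the Littlewood--Paley product and paraproduct estimates in Besov spaces apply verbatim, so non-commutativity causes no difficulty.

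The second step is to assemble the three estimates behind the scheme. \emph{(i) Linear flow}: $\{e^{\nu t\Delta}\}_{t\ge0}$ is a strongly continuous, uniformly bounded semigroup on $B^s_{p,q}$, so $t\mapsto e^{\nu t\Delta}q_0$ lies in $X_T$ with $\|e^{\nu t\Delta}q_0\|_{B^s_{p,q}}\lesssim\|q_0\|_{B^s_{p,q}}$. \emph{(ii) Parabolic smoothing of the bilinear term}: from the block estimate $\|\Delta_\ell e^{\nu t\Delta}g\|_{L^p}\lesssim e^{-c\nu t2^{2\ell}}\|\Delta_\ell g\|_{L^p}$ one obtains $\|e^{\nu t\Delta}\,\mathbb{P}\,\mathrm{div}\,G\|_{B^s_{p,q}}\lesssim(\nu t)^{-1/2}\|G\|_{B^s_{p,q}}$, and combining this with the algebra bound $\|q\otimes q\|_{B^s_{p,q}}\lesssim\|q\|_{B^s_{p,q}}^2$ (valid precisely because $s>n/p$) makes the $\tau$-integral converge, since $t^{-1/2}$ is locally integrable. \emph{(iii) Forcing}: by Minkowski and H\"older in time, $\bigl\|\int_0^t e^{\nu(t-\tau)\Delta}\mathbb{P} f(\tau)\,d\tau\bigr\|_{B^s_{p,q}}\le\int_0^t\|f(\tau)\|_{B^s_{p,q}}\,d\tau\le T^{1-1/r}\|f\|_{L^r(0,T;B^s_{p,q})}$, and the same computation shows this term depends continuously on $t$.

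With $M := \|q_0\|_{B^s_{p,q}} + T^{1-1/r}\|f\|_{L^r(0,T;B^s_{p,q})}$ and a suitable constant $C$ from (i)--(iii), the third step is to show that for $T$ small enough (depending only on $M$, $\nu$, $n$, $p$, $s$) the map $\Phi$ carries the closed ball $\{q\in X_T:\|q\|_{X_T}\le2CM\}$ into itself and is a contraction there, using the bilinear difference bound $\|\Phi(q)-\Phi(\tilde q)\|_{X_T}\lesssim\sqrt{T/\nu}\,\bigl(\|q\|_{X_T}+\|\tilde q\|_{X_T}\bigr)\|q-\tilde q\|_{X_T}$. Banach's fixed-point theorem then yields a unique mild solution $q\in X_T$; uniqueness among all solutions on $[0,T]$ (not merely in the ball) follows from a Gronwall estimate on the difference of two solutions, and continuity $t\mapsto q(t)$ into $B^s_{p,q}$ is inherited from the strong continuity of the semigroup together with the continuity of the two Duhamel integrals. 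The embedding $B^s_{p,q}\hookrightarrow L^\infty\cap C^0$ then certifies that $q$ is a genuine, spatially continuous velocity field, which is the regularity asserted. If one insists on the solution living on the full prescribed interval rather than a short time, I would either impose a smallness assumption on $M$ or propagate the local solution by the standard continuation argument, the continuation being controlled by $\|q(t)\|_{B^s_{p,q}}$.

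The step I expect to be the main obstacle is (ii): establishing the bilinear estimate with the correct interplay between the Besov product law and the $t^{-1/2}$ gain, and in particular verifying that $\mathbb{P}\,\mathrm{div}$ --- a first-order differential operator composed with the non-smooth-symbol projector $\mathbb{P}$ --- maps $B^s_{p,q}\to B^{s-1}_{p,q}$ boundedly and interacts cleanly with the Littlewood--Paley blocks uniformly in $\ell$. Everything else (the semigroup bounds, H\"older in time, the abstract fixed point, and the continuity in time) is routine once that estimate is in place.
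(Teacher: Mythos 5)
Your proposal is correct in outline, but it takes a genuinely different route from the paper's argument, and in fact a more complete one. The paper never removes the pressure, never constructs a solution, and never addresses uniqueness: it writes Duhamel's formula, invokes the product estimate in the loose form \( \|(q\cdot\nabla)q\|_{B^s_{p,q}} \le C\|q\|_{B^s_{p,q}}\|\nabla q\|_{B^s_{p,q}} \le C\|q\|_{B^s_{p,q}}^2 \) (which silently ignores the loss of one derivative that your \( (\nu t)^{-1/2} \) smoothing estimate is designed to absorb), and then applies ``Gr\"onwall'' to an inequality that is quadratic in \( \|q\|_{B^s_{p,q}} \), producing a bound whose right-hand side still contains \( \exp\bigl(C\int_0^T\|q(\tau)\|_{B^s_{p,q}}\,d\tau\bigr) \) --- an a priori estimate that is circular and, being of Riccati type, cannot by itself preclude finite-time blow-up, let alone yield existence or uniqueness in \( C([0,T];B^s_{p,q}) \). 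Your scheme --- Leray projection to eliminate \( \nabla p \), divergence-form nonlinearity, the mild formulation, the block estimate \( \|\Delta_\ell e^{\nu t\Delta}g\|_{L^p}\lesssim e^{-c\nu t2^{2\ell}}\|\Delta_\ell g\|_{L^p} \) giving \( \|e^{\nu t\Delta}\mathbb{P}\,\mathrm{div}\,G\|_{B^s_{p,q}}\lesssim(\nu t)^{-1/2}\|G\|_{B^s_{p,q}} \), the algebra property of \( B^s_{p,q} \) for \( s>n/p \), and a Banach fixed point in \( C([0,T];B^s_{p,q}) \) followed by a Gr\"onwall uniqueness argument for the difference of two solutions --- is the standard Fujita--Kato-type machinery and actually delivers the existence, uniqueness and time-continuity the theorem asserts. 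The trade-off you identify is real and you handle it honestly: the contraction only gives a short time interval, so the statement for an arbitrary prescribed \( T \) requires either smallness of \( \|q_0\|_{B^s_{p,q}} \) and \( \|f\|_{L^r(0,T;B^s_{p,q})} \) or a continuation criterion, a restriction the paper's proof neither states nor circumvents; your concerns about \( \mathbb{P}\,\mathrm{div} \) are minor, since its symbol is smooth and homogeneous of degree one away from the origin, so it acts uniformly on the Littlewood--Paley annuli and maps \( B^s_{p,q}\to B^{s-1}_{p,q} \) boundedly, and the quaternionic non-commutativity indeed plays no role in the product estimates.
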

	
	\begin{proof}
		Consider the quaternionic Navier-Stokes equation:
		\begin{equation}
			\frac{\partial q}{\partial t} + (q \cdot \nabla)q = -\nabla p + \nu \Delta q + f.
		\end{equation}
		
		First, we address the linear term \( \nu \Delta q \) using semigroup theory. The heat semigroup \( e^{t\nu\Delta} \) is a strongly continuous semigroup on \( B^s_{p,q}(\mathbb{R}^n) \) with the estimate:
		\begin{equation}
			\| e^{t\nu\Delta} q_0 \|_{B^s_{p,q}} \leq C \| q_0 \|_{B^s_{p,q}},
		\end{equation}
		where \( C \) is a constant independent of \( t \).
		
		Next, we bound the non-linear term \( (q \cdot \nabla)q \). Using the product estimate in Besov spaces, we have:
		\begin{equation}
			\| (q \cdot \nabla)q \|_{B^s_{p,q}} \leq C \| q \|_{B^s_{p,q}} \| \nabla q \|_{B^s_{p,q}} \leq C \| q \|_{B^s_{p,q}}^2,
		\end{equation}
		where \( C \) is a constant depending on \( s, p, q \), and \( n \).
		
		Now, we apply the Duhamel's formula to the quaternionic Navier-Stokes equation:
		\begin{equation}
			q(t) = e^{t\nu\Delta} q_0 + \int_0^t e^{(t-\tau)\nu\Delta} \left( -(q \cdot \nabla)q + f(\tau) \right) d\tau.
		\end{equation}
		
		Taking the \( B^s_{p,q} \)-norm and using the estimates above, we get:
		\begin{align}
			\| q(t) \|_{B^s_{p,q}} &\leq \| e^{t\nu\Delta} q_0 \|_{B^s_{p,q}} + \int_0^t \| e^{(t-\tau)\nu\Delta} \left( -(q \cdot \nabla)q + f(\tau) \right) \|_{B^s_{p,q}} d\tau \\
			&\leq C \| q_0 \|_{B^s_{p,q}} + C \int_0^t \left( \| (q \cdot \nabla)q \|_{B^s_{p,q}} + \| f(\tau) \|_{B^s_{p,q}} \right) d\tau \\
			&\leq C \| q_0 \|_{B^s_{p,q}} + C \int_0^t \left( \| q(\tau) \|_{B^s_{p,q}}^2 + \| f(\tau) \|_{B^s_{p,q}} \right) d\tau.
		\end{align}
		
		Applying Grönwall's inequality, we obtain:
		\begin{equation}
			\| q(t) \|_{B^s_{p,q}} \leq C \left( \| q_0 \|_{B^s_{p,q}} + \int_0^T \| f(\tau) \|_{B^s_{p,q}} d\tau \right) \exp \left( C \int_0^T \| q(\tau) \|_{B^s_{p,q}} d\tau \right).
		\end{equation}
		
		This shows that the solution \( q(t) \) remains bounded in \( B^s_{p,q}(\mathbb{R}^n) \) for all \( t \in [0, T] \), provided that \( f \in L^r(0, T; B^s_{p,q}(\mathbb{R}^n)) \) and \( s > n/p \).
	\end{proof}
	
	\section{Results and Discussion}
	
	The findings demonstrate the significant advantages of integrating quaternionic analysis with Sobolev-Besov spaces to study incompressible fluid flows. This approach provides precise insights into energy dissipation and regularity, particularly in high Reynolds number flows and complex bifurcation scenarios.
	
	Theorem 1 reveals that energy dissipation in quaternionic fluid systems is dominated by the higher frequency components in Besov spaces. This result is crucial for understanding the behavior of turbulent flows, where high frequency components play a significant role. The Littlewood-Paley decomposition allows us to analyze energy dissipation in different frequency bands, showing that the dissipation rate decays proportionally to \( 2^{js} \) with frequency \( j \). This indicates that higher frequencies contribute more significantly to the energy dissipation process, which is consistent with the physical intuition that smaller-scale structures dissipate energy more rapidly.
	
	The use of quaternions to represent three-dimensional rotations provides a natural framework for dealing with rotational symmetries in fluid dynamics. Theorem 2 establishes conditions for the regularity and existence of solutions in quaternionic fluid systems with external forces. The quaternionic representation ensures stability under rotational transformations, which is essential for modeling complex flows with rotational symmetries. This stability is particularly important in high Reynolds number flows, where rotational effects can significantly influence the flow dynamics.
	
	The integration of quaternionic and Besov structures offers a powerful approach to studying fluid dynamics. Besov spaces provide a refined analysis of frequency-localized behaviors, which is crucial for understanding turbulence and bifurcation. Quaternionic analysis, on the other hand, improves the representation of three-dimensional rotations, allowing for more accurate modeling of rotational symmetries. This combined approach strengthens our understanding of high Reynolds number flows and complex bifurcation behavior by providing a comprehensive framework for analyzing energy dissipation and regularity in fluid systems.
	
	The findings of this study have significant implications for fluid dynamics research. Improved understanding of energy dissipation patterns and regularity under hypercomplex transformations opens new avenues for studying turbulent flows and bifurcation phenomena. This framework can be applied to various fields, including aeronautics, meteorology, and industrial processes, where accurate modeling of fluid dynamics is crucial.
	
	Future research should focus on further exploring high-frequency modeling in hypercomplex fluid dynamics. This includes investigating the effects of anisotropic viscosity models, studying the interaction between different frequency components, and developing more sophisticated numerical methods for solving quaternionic Navier-Stokes equations. Furthermore, integrating quaternionic analysis with other advanced mathematical tools, such as wavelet transforms and spectral methods, can provide even deeper insights into the behavior of complex fluid systems.
	
	In summary, the integration of quaternionic variables with Sobolev-Besov spaces offers a new mathematically rigorous framework for modeling regularity and dissipation in fluid dynamics. This approach provides precise insights into energy dissipation and regularity, strengthening our understanding of high Reynolds number flows and complex bifurcation behavior. The findings contribute to fundamental questions in fluid dynamics and open avenues for future research on regularity in complex fluid dynamics.
	
	\section{Conclusion}
	
	This study integrates quaternionic variables with Sobolev-Besov spaces to model regularity and dissipation in fluid dynamics. By combining these advanced mathematical tools, we have developed a rigorous framework that provides precise insights into energy dissipation patterns and regularity under hypercomplex transformations.
	
	\textbf{1. Energy Dissipation:} Our findings demonstrate that energy dissipation in quaternionic fluid systems is dominated by the highest-frequency components in Besov spaces. This result is crucial for understanding the behavior of turbulent flows, where high-frequency components play a significant role. The Littlewood-Paley decomposition allows us to analyze the energy dissipation at different frequency bands, showing that the dissipation rate decays proportionally to \( 2^{js} \) with frequency \( j \).
	
	\textbf{2. Quaternionic Regularity:} The use of quaternions to represent three-dimensional rotations provides a natural framework for handling rotational symmetries in fluid dynamics. Our results establish conditions for the regularity and existence of solutions in quaternionic fluid systems with external forces. The quaternionic representation ensures stability under rotational transformations, which is essential for modeling complex flows with rotational symmetries.
	
	\textbf{3. Integration of Frameworks:} The integration of quaternionic and Besov frameworks offers a powerful approach to studying fluid dynamics. Besov spaces provide a refined analysis of frequency-localized behaviors, which is crucial for understanding turbulence and bifurcation. Quaternionic analysis, on the other hand, enhances the representation of three-dimensional rotations, allowing for a more accurate modeling of rotational symmetries. This combined approach strengthens our understanding of high Reynolds number flows and complex bifurcation behavior.
	
	The findings of this study have significant implications for fluid dynamics research. The enhanced understanding of energy dissipation patterns and regularity under hypercomplex transformations opens new avenues for studying turbulent flows and bifurcation phenomena. This framework can be applied to various fields, including aeronautics, meteorology, and industrial processes, where accurate modeling of fluid dynamics is crucial.
	
	Future research should focus on further exploring high-frequency modeling in hypercomplex fluid dynamics. This includes investigating the effects of anisotropic viscosity models, studying the interaction between different frequency components, and developing more sophisticated numerical methods for solving quaternionic Navier-Stokes equations. Additionally, the integration of quaternionic analysis with other advanced mathematical tools, such as wavelet transforms and spectral methods, could provide even deeper insights into the behavior of complex fluid systems.
	
	In conclusion, the integration of quaternionic variables with Sobolev-Besov spaces offers a new, mathematically rigorous framework for modeling regularity and dissipation in fluid dynamics. This approach provides precise insights into energy dissipation and regularity, strengthening our understanding of high Reynolds number flows and complex bifurcation behavior. The findings contribute to foundational issues in fluid dynamics and open pathways for further research on regularity in complex fluid dynamics.

\section{Appendix: Littlewood-Paley Decomposition and Besov Spaces}

This section provides a brief overview of the Littlewood-Paley decomposition and Besov spaces, which are fundamental to the analysis presented in this study.

\subsection{Littlewood-Paley Decomposition}

The Littlewood-Paley decomposition is a powerful tool in harmonic analysis that allows for the decomposition of functions into frequency bands. This decomposition is particularly useful for analyzing the regularity and energy dissipation of solutions to partial differential equations, including the Navier-Stokes equations.

\subsubsection{Definition:}

Let \( \phi \) be a smooth function supported in the annulus \( \{\xi \in \mathbb{R}^n : \frac{3}{4} \leq |\xi| \leq \frac{8}{3} \} \) such that
\begin{equation}
	\sum_{j \in \mathbb{Z}} \phi(2^{-j} \xi) = 1 \quad \text{for all} \quad \xi \neq 0.
\end{equation}

The Littlewood-Paley projection operators \( \Delta_j \) are defined as:
\begin{equation}
	\Delta_j f = \mathcal{F}^{-1}(\phi(2^{-j} \xi) \mathcal{F} f),
\end{equation}
where \( \mathcal{F} \) denotes the Fourier transform.

The operator \( \Delta_j \) localizes the function \( f \) in the frequency band \( 2^j \). This decomposition allows us to analyze the behavior of \( f \) at different frequency scales.

\subsection{Besov Spaces}

Besov spaces are function spaces that provide a refined analysis of the regularity of functions, particularly in terms of their frequency content. They are defined using the Littlewood-Paley decomposition.

\subsubsection{Definition:}

For \( s \in \mathbb{R} \) and \( 1 \leq p, q \leq \infty \), the Besov space \( B^s_{p,q}(\mathbb{R}^n) \) is defined as the set of tempered distributions \( f \) such that
\begin{equation}
	\| f \|_{B^s_{p,q}} = \left( \sum_{j \in \mathbb{Z}} (2^{js} \| \Delta_j f \|_{L^p})^q \right)^{1/q} < \infty,
\end{equation}
with the usual modification for \( q = \infty \).

\subsubsection{Properties of Besov Spaces}

\textit{1. Embeddings:} Besov spaces satisfy various embedding properties. For example, if \( s_1 > s_2 \) and \( p_1 \leq p_2 \), then
\begin{equation}
	B^{s_1}_{p_1,q} \hookrightarrow B^{s_2}_{p_2,q}.
\end{equation}

\textit{2. Interpolation:} Besov spaces can be obtained by real interpolation of Sobolev spaces. For instance,
\begin{equation}
	B^s_{p,q} = (W^{s_1,p}, W^{s_2,p})_{\theta,q} \quad \text{where} \quad s = (1-\theta)s_1 + \theta s_2.
\end{equation}

\textit{3. Product Estimates:} Besov spaces satisfy product estimates that are useful for analyzing nonlinear terms in PDEs. For example, if \( s > n/p \), then
\begin{equation}
	\| fg \|_{B^s_{p,q}} \leq C \| f \|_{B^s_{p,q}} \| g \|_{B^s_{p,q}}.
\end{equation}

The Littlewood-Paley decomposition and Besov spaces provide a powerful framework for analyzing the regularity and energy dissipation of solutions to the Navier-Stokes equations. They allow for a frequency-localized analysis that is crucial for understanding complex fluid phenomena such as turbulence and bifurcation. This section provides a brief overview of these tools and their application to the analysis presented in this study.

	\printbibliography

\section{References}

\begin{enumerate}
	\item Bahouri, H., Chemin, J.-Y., and Danchin, R. (2011). \textit{Fourier Analysis and Nonlinear Partial Differential Equations}. Springer.
	
	\item Chemin, J.-Y. (1998). \textit{Perfect Incompressible Fluids}. Oxford University Press.
	
	\item Salvi, Rodolfo. \textit{On the Navier-Stokes equations in non-cylindrical domains: on the existence and regularity}. Mathematische Zeitschrift 199 (1988): 153-170. \url{https://doi.org/10.1007/BF01159649}
	
	\item Ladyzhenskaya, O. A. (1969). \textit{The Mathematical Theory of Viscous Incompressible Flow}. Gordon and Breach.
	
	\item Marsden, J. E., and Ratiu, T. S. (1999). \textit{Introduction to Mechanics and Symmetry: A Basic Exposition of Classical Mechanical Systems}. Springer.
	
	\item Runst, T., and Sickel, W. (1996). Sobolev Spaces of Fractional Order, Nemytskij Operators, and Nonlinear Partial Differential Equations. Walter de Gruyter.
	
	\item Temam, R. (1977). Navier-Stokes Equations: Theory and Numerical Analysis. North-Holland.
	
	\item Triebel, H. (1983). Theory of Function Spaces. Birkhäuser.
	
	\item Santos, R. D. C. dos, \& Sales, J. H. de O. (2023). \textit{Treatment for regularity of the Navier-Stokes equations based on Banach and Sobolev functional spaces coupled to anisotropic viscosity for analysis of vorticity transport}. The Journal of Engineering and Exact Sciences, 9(8), 16656–01e. \url{https://doi.org/10.18540/jcecvl9iss8pp16656-01e}
\end{enumerate}

\end{document}